\def\Re{\mathop{\rm Re}}
\def\det{\mathop{\rm det}\nolimits}
\def\CC{{\mathbb C}}
\def\RR{{\mathbb R}}
\def\beq{\begin{equation}}
\def\eeq{\end{equation}}
\newtheorem{proposition}{{\bf Proposition}}[section]
\newtheorem{theorem}{{\bf Theorem}}[section]
\newtheorem{remark}{{\bf Remark}}[section]
\definecolor{Red}{cmyk}{0,1.,1.,0} % PANTONE RED
\newcommand{\red}[1]{#1} % the simplest way to remove all the red bits!
\title{$L_2$ and BIBO stability of systems with variable delays}
\author{Catherine Bonnet\thanks{Universit\'e Paris Saclay, Inria, CNRS, CentraleSupelec, Laboratoire des signaux et syst\`emes, 3 rue Joliot Curie, 91190 Gif-sur-Yvette, France. {\tt
Catherine.Bonnet@inria.fr}}\quad and 
Jonathan R. Partington\thanks{School of Mathematics, University of Leeds, Leeds LS2 9JT, U.K.
\tt{J.R.Partington@leeds.ac.uk}}}
\begin{document}

\maketitle

\begin{abstract}
 This paper considers  $L_2$ and BIBO stability and stabilization issues for systems  with time-varying delays which can be of retarded or neutral type. An important role is played by a nominal system with fixed delays which are close to the time-varying ones. Under stability  or stabilizability conditions of this nominal system,  sufficient conditions are given in order to ensure  similar properties for the system with time-varying delays. 
\end{abstract}

{\bf Keywords:}  neutral delay systems, time-varying delays, $L_2$-stability, $BIBO$-stability

\vskip0,5cm

\centerline{\bf In memory of Ruth Curtain.}

\section{Introduction and notation}

%Autonomous system with time-varying delays have been largely studied in the recent years however much 
% fewer works have been dedicated to such systems submitted  to external inputs. 

There has been a growing interest in systems with time-varying delays in recent years, in particular due to the study of 
large-scale networked systems where delays may have to be considered to be time-varying for better accuracy of the model. The presence of time-varying delays induces complex behaviours which have been pointed out and studied by many authors (see e.g. \cite{L01, MVN05, V12} the monograph \cite{F14} and the references therein).
Although the case of autonomous systems has been widely studied, much less work has been dedicated to such systems
submitted to arbitrary external inputs. \red{Several studies, e.g. \cite{B15}, consider the $L^2$-input-output norm of the time-varying delay operator}, however to the best of our knowledge, no study has been made of the input--output stability of
a standard feedback scheme involving a system with time-varying delays. We can find in \cite{quet} an input-output setting for determining rate-based flow controllers for communication networks in the presence of time-varying delays; however, an $H_\infty$ optimization problem is considered here which deals with several types of perturbations acting on the state-space system including time-varying coefficients and time-varying delays and this renders the comparison with our work inappropriate. 
Several papers (for example \cite{F08,FG,KR07,SF07}) mention an
input--output approach for systems with time-varying delays; 
however the input--output framework is not considered for the given
system but is introduced once an initially autonomous 
system with time-varying delays has been transformed into a system
with external input by means of the introduction of additional
variables given in terms of the states. 

In our setting, the classical
condition 
on the boundedness of the derivative of the time-varying delay by a
constant strictly lesser than one is not necessary. Note that this is
also the case in \cite{yw16} where $H_\infty$ state-feedback control
of systems time-varying input delays is considered. Physical
considerations (such as the order of events being preserved in the presence of a
time-varying delay) may impose that for a time-varying delay $\tau(t)$
the function $t \mapsto t -\tau(t)$ is strictly increasing, but since this only
implies that the derivative of $\tau$ is bounded by one, a relaxation of the
classical condition of boundedness is of use ({\em moderately varying delays} were first considered in \cite{FS}). 

We consider two versions of input--output stability: so-called $H_\infty$-stability (a finite gain when
the signals are measured in terms of energy, that is, $L_2$ norm), and BIBO stability (bounded inputs
give bounded outputs). Retarded and neutral type systems with pointwise or distributed delays are studied here. \red{Note that in \cite{F08}, the input-output approach to neutral systems has been presented, but the results were confined to slowly-varying delays.}

The very general framework is introduced in Section~\ref{sect-def-syst}, and the stability analysis is
given in Section~\ref{sect-stab}. Then, a discussion of feedback stabilization is given in Section~\ref{sec:stabilization}.
Finally, numerical examples appear in Section~\ref{sec:ex}.
The results we derive yield very explicit estimates for stability margins, as will be seen by
considering the examples we present.  

\subsection*{Notation}

$H_\infty$, or $H_\infty(\CC_+)$, denotes the space of bounded analytic functions on the complex
right half-plane $\CC_+$, with the supremum norm
$\|F\|_\infty = \sup _{s \in \CC_+} |F(s)|$. The $H^\infty$ norm gives an explicit expression for the $L_2$ input/output gain of a stable system.

In addition, ${\mathcal A}$ denotes the space of distributions of the form 
\[
g(t)=g_a(t) + \sum_{i=0}^\infty
g_i \delta(t-t_i),
\] where $g_a \in L_1(0,\infty)$ and $\sum_0^\infty |g_i|<\infty$; here,
$\delta(t-t_i)$ denotes a delay Dirac distribution, \red{$0 = t_0 <t_1, t_2,t_3 \ldots  $ the $t_k$ being distinct.} 
This is equipped with the norm 
\[
\|g\|_{\cal A}= \int_0^\infty |g_a(t)| \, dt + \sum_{i=0}^\infty |g_i|,
\]
which gives an expression for the $L_\infty$ input/output gain of a stable system.
Then $\hat{\cal A}$
is the space of Laplace transforms of functions in $\cal A$, which is a subalgebra of $H^\infty$.

\section{The class of systems studied}
\label{sect-def-syst}

In this paper we shall consider the variable-delay input--output system
\begin{eqnarray}
\dot x(t) + \sum_{\ell=1}^L A_{-\ell}\dot x(t-\gamma_\ell(t))&=& Ax(t)+\sum_{j=1}^J A_j x(t-\tau_j(t))  + \int_0^{\delta(t)} h(\theta) I x(t-\theta) \, d\theta \nonumber\\
&&
 + Bu(t)+ \sum_{k=1}^K B_k u(t-\sigma_k(t))  \, d\theta, \quad (t > 0),
\label{eq:system1}
\end{eqnarray}
where  $x(t) \in \CC^n$ denotes the state and $u(t) \in \CC^p$ the input, both assumed zero for $t\le 0$, the matrices
$A$, $A_{-\ell}$, $A_j$, $B$, $B_k$ and $I$ (identity matrix) have the appropriate sizes, the function $h(\theta)$ is of polynomial type and the delays $\gamma_\ell(t)$, $\tau_j(t)$, $\sigma_k(t)$ and $\delta(t)$ are
positive (to ensure stability, further hypotheses will be required later).\\

Were the delays constant (say, $\gamma_\ell(t)=H_\ell$, $\tau_j(t)=h_j$,
$\delta(t)=D$,
 and $\sigma_k(t)=T_k$ for each $j$, $k$ and $l$), then this system would have a transfer function given by
\begin{equation*}\label{eq:transfer1}
G(s) = \left (sI + \sum_{\ell=1}^L A_{-\ell}se^{- H_\ell s}-A-\sum_{j=1}^J A_j e^{-h_j s}-\int_0^D h(\theta)e^{-s\theta} \, d\theta \right)^{-1} 
\left (B+ \sum_{k=1}^K B_k e^{-T_k s} \right),
\end{equation*}
and $H_\infty$ stability (bounded $L_2$ gain) would correspond to $G$ being analytic and bounded
in the right half-plane $\CC_+$. \\

Let us mention that the considered distributed delay will give rise only to polynomial and exponential terms in the transfer function allowing one to divide the considered class of systems into retarded and neutral type classes as in the generic (distributed delay free) case. \\

% In fact, as we shall see later, we shall also need to assume that $sG(s) \in H_\infty$.

In \cite{FG}, Fridman and Gil' considered an autonomous system with variable delays, given by
\[
\dot x(t) = \sum_{j=1}^\ell A_j x(t-\tau_j(t)) , \qquad (t > 0),
\]
with initial conditions specified on an interval $[-\eta, 0]$. They analysed its asymptotic
stability using a frequency-domain approach. 

It is our aim to analyse the $H_\infty$ and BIBO stability of the system (\ref{eq:system1}), using
methods similar to those of \cite{FG} in a far more general context, assuming stability of the nominal system where the delays are fixed.\\

Consider the {\em nominal system\/} (as always, with zero initial conditions),
\beq\label{eq:system2}
\dot v(t) +\sum_{\ell=1}^L A_{-\ell} \dot v(t-H_\ell) = Av(t)+\sum_{j=1}^J A_j v(t-h_j) +\int_0^{D} h(\theta) I v(t-\theta)\, d\theta+ Bu(t)+ \sum_{k=1}^K B_k u(t-T_k),  
\eeq 
for $t>0$,

\medskip

Throughout the rest of the paper, we make the following hypothesis.

\medskip

\noindent {\bf Hypothesis (H):} we assume that $\sum_{\ell=1}^L \Vert A_{-\ell} \Vert <1$.

%\noindent {\bf Hypothesis 2:} the delays $H_\ell$ ($\ell=1, \cdots, L$) are supposed commensurate to a delay $H$ ($H_\ell = \ell H$).
% 
 
\medskip

\medskip
If  the $L_2$ gain from $u$ to $v$  is finite we have that $\dot v(t) +\sum_{\ell=1}^L A_{-\ell} \dot v(t-H_\ell)$ is in $L_\infty$ and there exists a constant K such that 
\[
\left\| \dot v(t) +\sum_{\ell=1}^L A_{-\ell} \dot v(t-H_\ell)\right\|_\infty  \leq K \Vert u \Vert _\infty.
\]
  From this inequality and the fact that $\Vert \dot v(t-H) \Vert_\infty  = \Vert \dot v(t) \Vert _\infty$ , we get that 
  \[
 \left (1-\sum_{\ell=1}^L \| A_{-\ell}\| \right) \Vert \dot v \Vert \leq K \Vert u \Vert_\infty,
  \]
   which means, as  $\sum_{\ell=1}^L \Vert A_{-\ell} \Vert <1$, that we also have a finite $L_\infty$  gain  from $u$ to $\dot v$.\\

If  the $L_2$ gain from $u$ to $v$  is finite we have that the associated transfer function $G_{nom}$ is in $H_\infty$, its denominator being 
\[
%\det(sI +\sum_{\ell=1}^L A_{-\ell}s e^{- H_\ell s}-A-\sum_{j=1}^J A_j e^{-h_j s}-\int_0^D h(\theta)e^{-\theta s})^{-1}.
\det(sI +\sum_{\ell=1}^L A_{-\ell}s e^{- H_\ell s}-A-\sum_{j=1}^J A_j e^{-h_j s}-\int_0^D h(\theta) I e^{-\theta s}  d\theta)
\]
 We can easily deduce from this that $sG(s)$ is also in $H_\infty$ as there is no problem with properness and no problem with boundedness on the imaginary axis. This means that there is a   finite  $L_2$ gain from $u$ to $\dot v$.

We shall need to consider the system 
\[
\dot z(t) + \sum_{\ell=1}^L A_{-\ell} \dot z(t-H_\ell) = Az(t) +\sum_{j=1}^J A_j z(t-h_j) + \int_0^D h(\theta) I z(t-\theta) \, d\theta +
w(t), \qquad (t > 0).
\]

For the same reasons as above, if  the $L_2$-gain  (from $w$ to $z$)  is finite, we also have a  $L_2$  finite gain from $w$ to $\dot z$ and  if  the $L_\infty$-gain  (from $w$ to $z$)  is finite we also have a finite $L_\infty$ gain from $ w$ to $\dot z$.

\medskip

\noindent Let us denote :

\medskip

\begin{tabular}{lll}
$M^{nom}_2$ & : &  the $L_2$ gain from $u$ to $v$ \\
$M^{nom}_\infty$  & : &  the $L_\infty$ gain from $u$ to $v$\\
$M^{nomd}_\infty$  & : & the $L_\infty$  gain  from $u$ to $\dot v$ \\
$M^{nomd}_2$ &:& the $L_2$ gain from $u$ to $\dot v$ \\ 
$M_2$ &:& the $L_2$ gain  from $w$ to $z$ \\
$M_\infty$ &:& the $L_\infty$ gains from $w$ to $z$\\
$M_2^d$ &:& the $L_2$  gain from $w$ to $\dot z$\\
 $M_\infty^d$ &:& the $L_\infty$ gain from $ w$ to $\dot z$ 
\end{tabular} 

\section{Stability analysis}
\label{sect-stab}
Our standing assumptions, together with Hypothesis (H) are that 
\medskip

$\begin{array}{l}
0 \le  H_\ell-\eta_\ell \le \gamma_\ell(t) \le H_\ell + \eta_\ell \\
0 \le h_j-\mu_j \le \tau_j(t) \le h_j + \mu_j \\ 
0 \le D-\epsilon \le \delta(t) \le D+\epsilon\\
0 \le T_k-\nu_k \le \sigma_k(t) \le T_k + \nu_k 
\end{array} $

\medskip

for all $t$ and for each $j$, $k$ and $\ell$, where $\eta_\ell, \mu_j, \epsilon, and  \nu_k$  are positive constants.

\medskip

Writing $y=x-v$ we have
\begin{eqnarray*}
\dot y(t) +  \sum_{\ell=1}^L A_{-\ell }\dot y(t-H_\ell) & = & A y(t) \\
& & + \sum_{j=1}^J A_j y(t-h_j)
+ \sum_{j=1}^J A_j[ x(t-\tau_j(t))-x(t-h_j)]\\  
& & + \int_0^D h(\theta)I  y(t-\theta) \, d\theta + \int_D^{\delta(t)} h(\theta) I x(t-\theta) \, d\theta \\
&  & +\sum_{\ell=1}^L A_{-\ell}[\dot x(t-H_\ell) - \dot x(t-\gamma_\ell(t))]\\
& &  + \sum_{k=1}^K B_k [ u(t-\sigma_k(t)) - u(t-T_k)], \qquad (t>0).
\end{eqnarray*}
We may write this as
\begin{eqnarray}
\dot y(t)  +    \sum_{\ell=1}^L A_{-\ell }\dot y(t-H_\ell)& = & A y(t) + \sum_{j=1}^J A_j y(t-h_j)  + \int_0^D h(\theta)I  y(t-\theta) \, d\theta\nonumber\\
& & + \sum_{j=1}^J A_j f_j(t) + \sum_{\ell=1}^L A_{-\ell }\tilde{f}_\ell(t) 
+ \int_D^{\delta(t)} h(\theta) I x(t-\theta) \, d\theta
+ \sum_{k=1}^K B_k g_k(t) \nonumber\\ && \label{eq:system3}
\end{eqnarray}
with 
\begin{eqnarray*}
f_j(t) &=&  x(t-\tau_j(t))-x(t-h_j), \\
\tilde{f}_\ell(t) &=& \dot x(t-H_\ell) - \dot x(t-\gamma_\ell(t)), \\
\hbox{and} \qquad g_k(t) &=& u(t-\sigma_k(t)) - u(t-T_k).
\end{eqnarray*}

We now need supplementary conditions to ensure that the functions $\tilde{f}$, $f_j$ and $g_k$ lie in $L_2(0,\infty)$ or $L_\infty(0,\infty)$.

\subsection{The special case of retarded type systems}

We begin with the case that
 all $A_{-\ell}=0$, since the system is then more robust to perturbations. 
 The equations under consideration are 
\beq\label{eq:system1-ret}
\dot x(t) = Ax(t)+\sum_{j=1}^J A_j x(t-\tau_j(t))  + \int_0^{\delta(t)} h(\theta) I x(t-\theta) \, d\theta +
Bu(t)+ \sum_{k=1}^K B_k u(t-\sigma_k(t))    \quad (t > 0),
% \dot x(t) = Ax(t)+\sum_{j=1}^\ell A_j x(t-\tau_j(t)) + Bu(t)+ \sum_{k=1}^m B_k u(t-\sigma_k(t)), \qquad (t > 0),
\eeq

\beq\label{eq:system2-ret}
% \dot v(t) = Av(t)+\sum_{j=1}^\ell A_j v(t-h_j) + Bu(t)+ \sum_{k=1}^m B_k u(t-T_k), \qquad (t > 0),
\dot v(t) = Av(t)+\sum_{j=1}^J A_j v(t-h_j) +\int_0^{D} h(\theta) I v(t-\theta)\, d\theta+ Bu(t)+ \sum_{k=1}^K B_k u(t-T_k), \qquad (t>0)
\eeq 

\beq\label{eq:system3-ret}
% \dot v(t) = Av(t)+\sum_{j=1}^J A_j v(t-h_j) +\int_0^{D} h(\theta) v(t-\theta)\, d\theta+ Bu(t)+ \sum_{k=1}^K B_k u(t-T_k), 
 \dot y(t) = A y(t) + \sum_{j=1}^J A_j y(t-h_j) + \int_0^D h(\theta) I y(t-\theta) \, d\theta+ \sum_{j=1}^J A_j f_j(t) + \int_D^{\delta(t)} h(\theta) I x(t-\theta) \, d\theta
+ \sum_{k=1}^K B_k g_k(t) , 
\eeq
and
\beq\label{eq:system4-ret}
\dot z(t)  = Az(t) +\sum_{j=1}^J A_j z(t-h_j) + \int_0^D h(\theta) I  z(t-\theta) \, d\theta +
w(t), \qquad (t > 0).
\eeq

\begin{theorem}\label{thm:2.1}
\noindent 1) Suppose that the system (\ref{eq:system2-ret})  is BIBO-stable. 

\noindent If $ M_\infty^d \sum_{j=1}^\ell \mu_j \|A_j\|<1$  and $ M_\infty \epsilon \|h \|_\infty  \left(\sum_{j=1}^\ell M_\infty^d \mu_j\|A_j\| (1-M)^{-1}   +1 \right) <1 $, then the system (\ref{eq:system1-ret}) is BIBO-stable.\\

\noindent 2) Suppose that the system (\ref{eq:system2-ret})  is $H_\infty$-stable and that $\dot u \in L_2$. 

\noindent If $ M=M_2^d \sum_{j=1}^\ell \mu_j \|A_j\|<1$  and $M_2 \epsilon \|h \|_\infty  ( \sum_{j=1}^\ell \mu_j\|A_j\| (1-M')^{-1}  +1) <1 $, then the system (\ref{eq:system1-ret}) is $H_\infty$-stable in the sense that there is a finite $L^2$ gain between $(u, \dot u)$ and $x$.
\end{theorem}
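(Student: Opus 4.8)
The plan is to analyse the error $y = x - v$. By (\ref{eq:system3-ret}), $y$ satisfies \emph{exactly} the auxiliary equation (\ref{eq:system4-ret}) with zero initial data and forcing
\[
w(t) = \sum_{j=1}^J A_j f_j(t) + \int_D^{\delta(t)} h(\theta)\, I\, x(t-\theta)\, d\theta + \sum_{k=1}^K B_k g_k(t).
\]
Hence, by the definitions of the gains of that system, $\|y\| \le M_2\|w\|$ and $\|\dot y\|\le M_2^d\|w\|$ in the $L_2$ setting (and $\|y\|\le M_\infty\|w\|$, $\|\dot y\|\le M_\infty^d\|w\|$ in the $L_\infty$ setting), while $\|v\|\le M_2^{nom}\|u\|$ and $\|\dot v\|\le M_2^{nomd}\|u\|$ (respectively with the $\infty$-gains); all these gains are finite under the hypotheses, using the observations already recorded in Section~\ref{sect-def-syst} that a finite gain to the state forces a finite gain to its derivative. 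Everything then reduces to controlling $w$ by $x$, $\dot x$ and the inputs, followed by a small-gain argument.

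For the perturbation estimates, note that $f_j(t) = \int_{t-h_j}^{t-\tau_j(t)}\dot x(s)\,ds$ with $|\tau_j(t)-h_j|\le\mu_j$, that $g_k(t) = u(t-\sigma_k(t)) - u(t-T_k)$, and that $|\delta(t)-D|\le\epsilon$; the fundamental theorem of calculus then gives at once $\|f_j\|_\infty\le\mu_j\|\dot x\|_\infty$, $\|g_k\|_\infty\le 2\|u\|_\infty$ and $\left\| \int_D^{\delta(\cdot)} h(\theta)\, I\, x(\cdot-\theta)\,d\theta \right\|_\infty \le \epsilon\|h\|_\infty\|x\|_\infty$, which is all that part~1 requires. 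For part~2 the same three quantities must be bounded in $L_2$; writing each as an integral of $\dot x$, of $\dot u$ (for $g_k$) or of $x$ over a window of length at most $\mu_j$, $\nu_k$ or $\epsilon$ and applying Minkowski's integral inequality (equivalently Cauchy--Schwarz together with Fubini) yields $\|f_j\|_2\le\mu_j\|\dot x\|_2$, $\|g_k\|_2\le\nu_k\|\dot u\|_2$ and $\left\| \int_D^{\delta(\cdot)} h(\theta)\, I\, x(\cdot-\theta)\,d\theta \right\|_2 \le \epsilon\|h\|_\infty\|x\|_2$; it is the factor $\|\dot u\|_2$ arising from the $g_k$ terms that forces the gain in part~2 to be taken from $(u,\dot u)$ to $x$.

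Now comes the nested small-gain step. Bounding $\|w\|$ by the estimates just obtained and inserting into $\|\dot x\|\le\|\dot v\| + \|\dot y\| \le M_2^{nomd}\|u\| + M_2^d\|w\|$, the right-hand side contains the term $\bigl(M_2^d\sum_j\mu_j\|A_j\|\bigr)\|\dot x\| = M\|\dot x\|$; since the first hypothesis gives $M<1$, one solves for $\|\dot x\|$ to obtain a bound of the form $\|\dot x\|\le (1-M)^{-1}\bigl(c_1\|u\| + c_2\|\dot u\| + M_2^d\epsilon\|h\|_\infty\|x\|\bigr)$. Substituting this into $\|x\|\le\|v\|+\|y\|\le M_2^{nom}\|u\| + M_2\|w\|$ and collecting the terms proportional to $\|x\|$, the coefficient of $\|x\|$ comes out to be exactly the quantity $M_2\epsilon\|h\|_\infty\bigl(M_2^d\sum_j\mu_j\|A_j\|(1-M)^{-1}+1\bigr)$ appearing in the second hypothesis, hence strictly less than $1$; solving for $\|x\|$ delivers the claimed finite gain from $(u,\dot u)$ to $x$, and re-inserting it in the bound for $\|\dot x\|$ gives a finite gain to $\dot x$ as well. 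Part~1 is identical with every $L_2$-gain replaced by the corresponding $L_\infty$-gain (and $2\|u\|_\infty$ used in place of $\nu_k\|\dot u\|_\infty$).

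Finally, two technical points. The inequalities above are a priori, as they presuppose that $x$, $\dot x$ and $w$ already lie in the relevant space; this is legitimised by running the entire chain with all norms restricted to a finite interval $[0,T]$ — which is valid since all delays are nonnegative, so the maps $w\mapsto z$, $w\mapsto\dot z$, $u\mapsto v$, $u\mapsto\dot v$ are causal and the truncated estimates hold with the same constants — obtaining a bound uniform in $T$ and then letting $T\to\infty$; the solution $x$ itself, together with $\dot x$, exists and lies in $L^2_{\mathrm{loc}}$ by standard existence theory for delay differential equations. The substantive work is therefore the $L_2$ perturbation estimates and the bookkeeping that produces exactly the stated thresholds; by comparison the $L_\infty$ case and the truncation argument are routine. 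This step — getting the windowed $L_2$ estimates sharp enough and then threading them through the two nested inversions to recover precisely the displayed conditions — is the one I expect to be the main obstacle.
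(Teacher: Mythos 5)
Your proposal is correct and follows essentially the same route as the paper: the decomposition $y=x-v$, the windowed bounds $\|f_j\|\le\mu_j\|\dot x\|$, $\|g_k\|_\infty\le 2\|u\|_\infty$, $\|g_k\|_2\le\nu_k\|\dot u\|_2$, and the nested small-gain inversion first in $\|\dot x\|$ then in $\|x\|$ are exactly the paper's argument (your truncation/causality remark is an added refinement the paper leaves implicit). The only divergence is that your coefficient of $\|x\|_2$ in part~2 carries an extra factor $M_2^d$ in front of $\sum_j\mu_j\|A_j\|(1-M')^{-1}$, which is the internally consistent version (matching part~1), whereas the paper's stated condition and proof drop that factor.
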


\begin{proof}

1) The basic calculation is as follows:
\begin{eqnarray*}
\|\dot x\|_\infty &\le& \|\dot v\|_\infty + \|\dot y\|_\infty \\ 
& \le & \|\dot v\|_\infty +   M^{d}_\infty (\sum_{j=1}^\ell \|A_j\|\|f_j\|_\infty + \epsilon \Vert h\Vert_\infty \Vert x \Vert_\infty + \sum_{k=1}^m \|B_k\|\|g_k\|_\infty)
\end{eqnarray*}
and
then we shall bound $\|f_j\|_\infty$ in terms of $\|\dot x\|_\infty$. 

We have 
\[
 \|f_j\|_\infty =  \sup_t \left \| \int_{t-\tau_j(t)}^{t-h_j} \dot x(s) \, ds \right\| 
 \le  \mu_j \|\dot x\|_\infty
\]

and

\[
 \|g_k\|_\infty =  \sup_t  \Vert  [ u(t-\sigma_k(t)) - u(t-T_k)] \vert  \leq  2  \Vert u \Vert _\infty
\]

So that, provided that   $M:=M_\infty^d \sum_{j=1}^\ell \mu_j \|A_j\| < 1$, we have

\[
 \|\dot x\|_ \infty \le \|\dot v\|_\infty  + M  \|\dot x\|_\infty +M_\infty^d \epsilon \Vert h\Vert_\infty \Vert x \Vert_\infty +  M_\infty^d \sum_{k=1}^m 2 \|B_k\|  \|u\|_\infty,
 \]
 or
 \[
 \|\dot x\|_\infty \le (1-M)^{-1}\left( M_\infty^{nomd}\|u\|_\infty + M_\infty^d \epsilon \Vert h\Vert_\infty \Vert x \Vert_\infty  + M_\infty^d \sum_{k=1}^m 2 \|B_k\|  \|u\|_\infty \right),
 \]

% and hence there is a bound on $\|f_j\|_\infty$.

Now $x=v+y$ and so by (\ref{eq:system2-ret}) and (\ref{eq:system3-ret}) we have 
\begin{eqnarray*}
\|x\|_\infty &\le & \|v\|_\infty + \|y\|_\infty \\
& \le &  M_\infty^{nom} \|u\|_\infty + M_\infty (\sum_{j=1}^\ell \|A_j\|\|f_j\|_\infty + \epsilon \Vert h\Vert_\infty \Vert x \Vert_\infty +  \sum_{k=1}^m \|B_k\|\|g_k\|_\infty) \\
& \le &  M_\infty^{nom}  \|u\|_\infty + M_\infty  (\sum_{j=1}^\ell \mu_j\|A_j\|\|\dot x\|_\infty +  \sum_{k=1}^m 2 \|B_k\|\| u\|_\infty + \epsilon \Vert h\Vert_\infty \Vert x \Vert_\infty )
\end{eqnarray*}
Let $ \displaystyle \tilde{M}:= M_\infty \epsilon \Vert h\Vert_\infty \left(1+ \sum_{j=1}^\ell \mu_j\|A_j\|\|M_\infty^d (1-M)^{-1}\right)$, we have
\begin{eqnarray*}
(1-  \tilde{M}) \|x\|_\infty  &\leq & M_\infty^{nom} \|u\|_\infty  + \\
& & M_\infty  \left( \sum_{j=1}^\ell \mu_j\|A_j\| (1-M)^{-1} ( M_\infty^{nomd} + M_\infty^d \sum_{k=1}^m 2 \|B_k\| ) +  \sum_{k=1}^m 2 \|B_k\| \right)  \|u\|_\infty \\
% &  & +  \sum_{k=1}^m 2 \|B_k\| ) ) \|u\|_\infty ,\\
\end{eqnarray*} 
which under the condition  $ \tilde{M} < 1 $ gives a finite $L_\infty$ gain from $u$ to $x$.\\

2) In the case of $L_2$-stability, we start with the same inequality as above.  
First, similarly to \cite{FG}, we have, 
recalling that $x(t)=\dot x(t)=0$ for $t \le 0$,
\begin{eqnarray*}
\|f_j\|^2_2 &=& \int_0^\infty \left \| \int_{t-\tau_j(t)}^{t-h_j} \dot x(s) \, ds \right\|^2 \, dt
\le \int_0^\infty \mu_j \int_{t-h_j-\mu_j}^{t-h_j} \| \dot x(s)\|^2 \, ds \, dt \\
& \le & \mu_j^2 \int_0^\infty \|\dot x(r)\|^2 \, dr = \mu_j^2 \|\dot x\|_2^2.
\end{eqnarray*}

Then, to  bound $\|g_k\|_2$ requires some restrictive
conditions on $u$.  With the condition that $\dot u \in L_2$, we get $\|g_k\|_2 \le \nu_k\|\dot u\|_2$ and the stability result obtained in the case that there are
input delays will take the form
\[
\|x\| _2\le C_1 \|u\|_2 + C_2 \|\dot u\|_2.
\]

Here again, provided that $M':=M_2^d \sum_{j=1}^\ell \mu_j \|A_j\| < 1$, we have

 \[
 \|\dot x\|_2 \le \|\dot v\|_2 + M' \|\dot x\|_2+ \epsilon \|h \|_\infty  \|x \|_2+ M_2^d   \sum_{k=1}^m \nu_k\|B_k\|  \|\dot u\|_2,
 \]
 or
 \[
 \|\dot x\|_2 \le (1-M')^{-1}\left(M_2^{nomd}\|u\|_2 + \epsilon \|h \|_\infty \|x \|_2  + M_2^d \sum_{k=1}^m \nu_k\|B_k\|  \|\dot u\|_2\right),
 \]

 By (\ref{eq:system2-ret}) and (\ref{eq:system3-ret}) we have 
\[ \|x\|_2  \le   M_2^{nom}  \|u\|_2+   M_2     (\sum_{j=1}^\ell \mu_j\|A_j\| \|\dot x\|_2 + \epsilon \|h \|_\infty  \|x \|_2  +  \sum_{k=1}^m \nu_k\|B_k\|\|\dot u\|_2), 
\]
that is,
  
% \red{\begin{eqnarray*}
\begin{multline*}
  \left(1-M_2 \epsilon \|h \|_\infty  (  \sum_{j=1}^\ell \mu_j\|A_j\| (1-M')^{-1}  +1) \right) \|x\|_2   
 \\  \leq   \left( M_2^{nom} + M_2 M_2^{nomd}\sum_{j=1}^\ell \mu_j\|A_j\| (1-M')^{-1}  \right)  \|u\|_2  
+ \left( M_2  \sum_{k=1}^m \nu_k\|B_k\|  ( M_2^d   +1) \right) \| \dot u\|_2,
\end{multline*}
% \end{eqnarray*}}
which gives, if  $ M_2 \epsilon \|h \|_\infty  ( \sum_{j=1}^\ell \mu_j\|A_j\| (1-M')^{-1}  +1) <1$, a finite $L_2$ gain from $(u,\dot u)$ to $x$ if there are input delays.

\end{proof}

\begin{remark}
If $B_k=0$ for all $k$, or if the $\sigma_j$ are constant functions for all $j$, we do not need to impose the condition that $\dot u$ lies
in $L_2$ in order to get $H_\infty$-stability.

\end{remark}
\subsection{The general case of neutral systems}

In general it is easier to destabilize a neutral delay system by a perturbation, and here we simply give a result for BIBO stability.

\begin{theorem}
% 1) 
Suppose that the system (\ref{eq:system2}) is BIBO-stable. 

\noindent If $ M_\infty^d (\sum_{j=1}^\ell \mu_j \|A_j\|  + 2 \sum_{l=1}^L \Vert A_{-l}\Vert)<1$ then the system (\ref{eq:system1}) is BIBO-stable.

% \noindent 2) Suppose that the system (\ref{eq:system2})  is $H_\infty$-stable and that $\dot u \in L_2$. If $ <1$  then the 
% system (\ref{eq:system1}) is $H_\infty$-stable in the sense that there is a finite gain between $(u, \dot u)$ and $x$ ????
\end{theorem}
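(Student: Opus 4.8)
The plan is to mimic the BIBO part of the proof of Theorem~\ref{thm:2.1}, but now carrying the extra neutral terms $\sum_{\ell=1}^L A_{-\ell}\tilde f_\ell(t)$ through the estimates. Write $x=v+y$, where $v$ solves the nominal system (\ref{eq:system2}) and $y$ solves (\ref{eq:system3}). Since (\ref{eq:system2}) is BIBO-stable, Hypothesis~(H) guarantees (as shown in the preliminary discussion in Section~\ref{sect-def-syst}) that there are finite $L_\infty$ gains $M_\infty^{nom}$ from $u$ to $v$ and $M_\infty^{nomd}$ from $u$ to $\dot v$; likewise, applying the same reasoning to the auxiliary system with forcing $w$, the solution operator $w\mapsto z$ and $w\mapsto\dot z$ have finite $L_\infty$ gains $M_\infty$ and $M_\infty^d$. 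Regard (\ref{eq:system3}) as that auxiliary system driven by $w(t)=\sum_j A_j f_j(t)+\sum_\ell A_{-\ell}\tilde f_\ell(t)+\int_D^{\delta(t)}h(\theta)Ix(t-\theta)\,d\theta+\sum_k B_k g_k(t)$.

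First I would bound $\|\dot x\|_\infty$. Applying $M_\infty^d$ to the forcing $w$ above gives
\[
\|\dot x\|_\infty\le\|\dot v\|_\infty+M_\infty^d\Bigl(\sum_{j=1}^\ell\|A_j\|\,\|f_j\|_\infty+\sum_{\ell=1}^L\|A_{-\ell}\|\,\|\tilde f_\ell\|_\infty+\epsilon\|h\|_\infty\|x\|_\infty+\sum_{k=1}^m\|B_k\|\,\|g_k\|_\infty\Bigr).
\]
As in the retarded case, $\|f_j\|_\infty\le\mu_j\|\dot x\|_\infty$ and $\|g_k\|_\infty\le2\|u\|_\infty$; the new ingredient is the pointwise bound $\|\tilde f_\ell\|_\infty=\sup_t\|\dot x(t-H_\ell)-\dot x(t-\gamma_\ell(t))\|\le2\|\dot x\|_\infty$, using only that $\dot x$ vanishes on $(-\infty,0]$. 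Substituting, the $\dot x$ terms on the right have coefficient $M_\infty^d\bigl(\sum_j\mu_j\|A_j\|+2\sum_\ell\|A_{-\ell}\|\bigr)$, which is exactly the quantity assumed to be $<1$; call it $N$. So
\[
(1-N)\|\dot x\|_\infty\le M_\infty^{nomd}\|u\|_\infty+M_\infty^d\epsilon\|h\|_\infty\|x\|_\infty+2M_\infty^d\sum_{k=1}^m\|B_k\|\,\|u\|_\infty,
\]
giving $\|\dot x\|_\infty$ in terms of $\|u\|_\infty$ and $\|x\|_\infty$.

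Next I would close the loop on $\|x\|_\infty$ exactly as in Theorem~\ref{thm:2.1}: from $x=v+y$, bound $\|x\|_\infty\le M_\infty^{nom}\|u\|_\infty+M_\infty\bigl(\sum_j\mu_j\|A_j\|\|\dot x\|_\infty+2\sum_\ell\|A_{-\ell}\|\|\dot x\|_\infty+\epsilon\|h\|_\infty\|x\|_\infty+2\sum_k\|B_k\|\|u\|_\infty\bigr)$, insert the bound for $\|\dot x\|_\infty$ just obtained, and collect the $\|x\|_\infty$ terms. This yields a factor $\bigl(1-M_\infty\epsilon\|h\|_\infty(1+N(1-N)^{-1})\bigr)\|x\|_\infty\le(\text{const})\|u\|_\infty$. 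I expect the only real obstacle to be bookkeeping: the statement as printed lists a single condition $N<1$, but the distributed-delay feedback generically forces a second smallness condition (as in part~1 of Theorem~\ref{thm:2.1}) — namely that the coefficient of $\|x\|_\infty$ above be positive, i.e. $M_\infty\epsilon\|h\|_\infty(1+N(1-N)^{-1})<1$. If $\epsilon=0$ (no distributed-delay perturbation) this is automatic and $N<1$ alone suffices; otherwise the second inequality must be added, and I would state it explicitly. Modulo that, the argument is a routine neutral-type adaptation of the retarded proof, with $2\sum_\ell\|A_{-\ell}\|$ playing the role that $\sum_j\mu_j\|A_j\|$ played before.
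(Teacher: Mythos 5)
Your proposal follows essentially the same route as the paper's proof: the same decomposition $x=v+y$, the same bounds $\|f_j\|_\infty\le\mu_j\|\dot x\|_\infty$, $\|\tilde f_\ell\|_\infty\le 2\|\dot x\|_\infty$, $\|g_k\|_\infty\le 2\|u\|_\infty$, and the same smallness condition $M_\infty^d(\sum_j\mu_j\|A_j\|+2\sum_\ell\|A_{-\ell}\|)<1$ to absorb the $\|\dot x\|_\infty$ terms. Your observation that a second condition of the form $M_\infty\epsilon\|h\|_\infty(1+\cdots)<1$ is needed to close the loop on $\|x\|_\infty$ when $\epsilon>0$ is correct and is in fact a point the paper glosses over: its own proof carries the term $\epsilon\|h\|_\infty\|x\|_\infty$ into the final inequality and then simply asserts the conclusion from $x=v+y$, whereas the analogous retarded result (Theorem~\ref{thm:2.1}, part 1) states the corresponding extra condition explicitly.
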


\begin{proof}
  As for retarded systems we start with the inequality 
\[
\|\dot x\|_\infty \le \|\dot v\|_\infty +   M^{d}_\infty (\sum_{j=1}^\ell \|A_j\|\|f_j\|_\infty +  \sum_{k=1}^m \|B_k\|\|g_k\|_\infty  +\sum_{l=1}^L \Vert A_{-l} \Vert \tilde{f}\Vert _\infty    + \epsilon \Vert h \Vert_\infty \Vert x \Vert _\infty).
\]

We have 
\[
 \|\tilde f\|_\infty =  \sup_t \Vert \dot x(t-\gamma(t) )- \dot x (t-H)  \Vert \leq 2  \Vert \dot x\Vert _\infty,
\]
whereas 
\[
 \|f_j\|_\infty \le  \mu_j \|\dot x\|_\infty
\]
and
\[
 \|g_j\|_\infty  \leq  2  \Vert u \Vert _\infty
\]
as in the retarded case.

Provided that $M'':=M_\infty^d (\sum_{j=1}^\ell \mu_j \|A_j\|  + 2 \sum_{l=1}^L \Vert A_{-l} \Vert) < 1$, we have

 \[
 \|\dot x\|_\infty \le (1-M'')^{-1}\left( M_\infty^{nomd}\|u\|_\infty  + M_\infty^d \sum_{k=1}^m 2 \|B_k\|  \|u\|_\infty   + \epsilon \Vert h \Vert_\infty \Vert x \Vert _\infty \right),
 \]
% and hence there is a bound on $\|f_j\|_\infty$.

From the relation $x=v+y$, we get  a finite $L_\infty$ gain from $u$ to $x$.

% 2) Let us derive each member of the equality  (\ref{eq:system2}). By the same reasoning as in section \ref{sect-def-syst} we get that there is a finite $L_2$ gain between $\dot u $ and $\ddot y$.

% We have
% \begin{eqnarray*}
% \|\tilde f\|^2_2 &=& \int_0^\infty \left \| \int_{t-\gamma(t)}^{t-H} \ddot x(s) \, ds \right\|^2 \, dt
% \le \int_0^\infty \eta \int_{t-H-\eta }^{t-H} \| \ddot x(s)\|^2 \, ds \, dt \\
% & \le & \eta^2 \int_0^\infty \|\ddot x(r)\|^2 \, dr = \eta^2 \|\ddot x\|_2^2.
% \end{eqnarray*}

\end{proof}

\section{Stabilization properties}
\label{sec:stabilization}

\subsection{The case of retarded systems}
We consider here the stabilization of  system  (\ref{eq:system1-ret}) through the following standard feedback scheme where $r$ and $d$ are  external input signals. \\

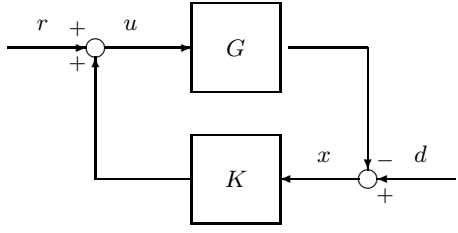
\begin{figure}[hbt]
  \centering
  \hskip3cm
  \footnotesize
  \setlength{\unitlength}{.0075\textwidth}
  \begin{picture}(80,35)
  \put(21,25){\framebox(10,10){$G$}}
  \put(21,10){\framebox(10,10){$K$}}
  \thinlines
  \put(0,30){\vector(1,0){9}}
  \put(4,32){\makebox(0,0)[b]{$r$}}
 \put(14,32){\makebox(0,0)[b]{$u$}}
  \put(10,30){\circle{2}}
\put(9,33){\makebox(0,0)[tr]{\scriptsize $+$}}
\put(9,29){\makebox(0,0)[tr]{\scriptsize $+$}}

  \put(11,30){\vector(1,0){10}}
  \put(32,30){\line(1,0){9}}
  \put(41,30){\vector(0,-1){14}}
  \put(41,15){\circle{2}}
  \put(51,15){\vector(-1,0){9}}
\put(47,17){\makebox(0,0)[b]{$d$}}
\put(44,18){\makebox(0,0)[tr]{\scriptsize $-$}}
\put(44,14){\makebox(0,0)[tr]{\scriptsize $+$}}
\put(40,15){\vector(-1,0){9}}
\put(21,15){\line(-1,0){11}}
\put(10,15){\vector(0,1){14}}
\put(36,17){\makebox(0,0)[b]{$x$}}
  \end{picture}
  \normalsize
  \caption{Standard Feedback Configuration}
  \label{standard}
\end{figure}

Retarded systems are $H_\infty$-stabilizable  (see \cite{BP99}) and so strongly stabilizable (see \cite{quadrat}). They are also $BIBO$-stabilizable (see \cite{BP99}) and those with commensurate delays are strongly $BIBO$-stabilizable (see \cite{MS10}). 

Therefore, we may consider here only strong stabilization of system (\ref{eq:system1-ret}) by a controller $K$, with convolution kernel ${ \cal K } $. 
In this case, instead of studying the stability of the four matrix entries between $(r,d)^ T  $ and $(x,u) ^ T  $, it becomes sufficient to only study the stability of that between $r$ and $x$.  
In the following, the control law $u$ is then taken of the type $u={\cal K}*x +r$.

The closed-loop has the following equation 

\begin{eqnarray}
\dot x(t)  & =& Ax(t)+\sum_{j=1}^\ell A_j  x(t-\tau_j(t)) + \int_0^{\delta(t)} h(\theta) x(t-\theta) \, d\theta   + B ({\cal K}*x)(t) + \sum_{k=1}^m B_k ({\cal K}*x)(t-\sigma_k(t) ) 
\nonumber\\
&  &  +Br(t) + \sum_{k=1}^m B_k r(t-\sigma_k(t) ), \qquad (t > 0),
\label{eq:systemcl}
\end{eqnarray}

Consider the system

\begin{eqnarray}
\label{eq:system10}
\dot v(t)  & =& Av(t)+\sum_{j=1}^\ell A_j v(t-h_j) +\int_0^{D} h(\theta) v(t-\theta)\, d\theta + B ({\cal K}*v)(t) + \sum_{k=1}^m B_k ({\cal K}*v)(t-T_k) 
\nonumber\\
&  &  +Br(t) + \sum_{k=1}^m B_k r(t-T_k), \qquad (t > 0),
\end{eqnarray} 

and let us denote:

\medskip

\begin{tabular}{lll}
 $M^{cl}_\infty$  & : &  the  $L_\infty$-gain of the closed-loop \\
$M^{cl d}_\infty$ &:& $L_\infty$-gain  between input and derivative of the state of the closed-loop\\
$M^{cl}_2$&:& the  $L_2$-gain of the closed-loop\\
$M^{cl nom d}_\infty$ &:& $L_\infty$-gain  between input $r$ and $\dot{v}$
\end{tabular}

\begin{theorem}\label{thm:4.1}
%Let us consider system (\ref{eq:system1-ret}). 

1) Let us suppose that the nominal system (\ref{eq:system2-ret}) has commensurate delays. Let $ K$ be a stable controller which $BIBO$-stabilizes the nominal system (\ref{eq:system2-ret}), the $L_\infty$-gain of $K$ being denoted $M^K_\infty$. 

If  $M^{cl}_\infty(\sum_{j=1}^\ell \mu_j \Vert A_j\Vert  +  \sum_{k=1}^m \nu_k \Vert B_k \Vert  M^  K_\infty  +\epsilon \Vert h \Vert _\infty )<  1$ then the controller $K$ stabilizes system (\ref{eq:system1-ret}) in a BIBO sense.\\

\noindent 2)  Let $K$ be a stable controller which $H_\infty$-stabilizes the nominal system (\ref{eq:system2-ret}), the $L_2$-gain of $ K$ being denoted $M^  K_2$. Let us suppose that $\dot r$ is in $L_2$. 

If  $  M^{cl}_2(\sum_{j=1}^\ell  \mu_j  \Vert A_j\Vert +  \sum_{k=1}^m \nu_k  \Vert B_k \Vert M^  K_2 )< 1$ then the controller $K$ stabilizes system (\ref{eq:system1-ret}) in the sense that there is a finite $L_2$ gain between $r$ and $x$.

\end{theorem}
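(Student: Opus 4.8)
The plan is to follow the proof of Theorem~\ref{thm:2.1} line by line, with the open-loop nominal system replaced by the nominal closed loop~(\ref{eq:system10}) and the controller kernel $\cal K$ carried through every estimate. Since the chosen $K$ is stable, the reduction recalled before the statement lets us forget the full $2\times 2$ closed-loop matrix and argue only for the map $r\mapsto x$; so we set $d=0$, $u={\cal K}*x+r$, making the closed loop~(\ref{eq:systemcl}) with nominal counterpart~(\ref{eq:system10}).

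First I would set $y=x-v$ with $v$ the solution of~(\ref{eq:system10}) driven by the same $r$, and subtract. The undelayed feedback term $B({\cal K}*\,\cdot)$ and the nominal-delay part of $\sum_kB_k({\cal K}*\,\cdot)(\,\cdot-T_k)$ recombine into the nominal-closed-loop operator applied to $y$, while the rest becomes an additive disturbance
\[
w(t)=\sum_{j}A_jf_j(t)+\int_D^{\delta(t)}h(\theta)x(t-\theta)\,d\theta+\sum_kB_k\big[({\cal K}*x)(t-\sigma_k(t))-({\cal K}*x)(t-T_k)\big]+\sum_kB_k g_k(t),
\]
with $f_j(t)=x(t-\tau_j(t))-x(t-h_j)$ and $g_k(t)=r(t-\sigma_k(t))-r(t-T_k)$. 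Hence $y$ is the state of the disturbed nominal closed loop with input $w$, and BIBO- (resp.\ $H_\infty$-) stability of~(\ref{eq:system10}) gives $\|y\|_\infty\le M^{cl}_\infty\|w\|_\infty$, $\|\dot y\|_\infty\le M^{cl d}_\infty\|w\|_\infty$ (resp.\ the $L_2$ analogues with $M^{cl}_2$ and the corresponding derivative gain), while $\|\dot v\|_\infty\le M^{cl nom d}_\infty\|r\|_\infty$ and $\|v\|_\infty\le C\|r\|_\infty$ are the nominal closed-loop bounds.

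The bounds on the pieces of $w$ are routine except for the feedback terms. As in Theorem~\ref{thm:2.1}, $\|f_j\|_\infty\le\mu_j\|\dot x\|_\infty$ and $\|f_j\|_2\le\mu_j\|\dot x\|_2$, $\|\int_D^{\delta(t)}h(\theta)x(t-\theta)\,d\theta\|_\infty\le\epsilon\|h\|_\infty\|x\|_\infty$, while in part~1 one only needs $\|g_k\|_\infty\le2\|r\|_\infty$ and in part~2 $\|g_k\|_2\le\nu_k\|\dot r\|_2$ --- which is precisely why $\dot r\in L_2$ is assumed. For the shifted feedback terms one uses that $K$ is stable, so $\|{\cal K}*\phi\|_\infty\le M^K_\infty\|\phi\|_\infty$ and $\|{\cal K}*\phi\|_2\le M^K_2\|\phi\|_2$, together with $\frac{d}{dt}({\cal K}*x)={\cal K}*\dot x$, which holds because $x$ and $\dot x$ vanish on $(-\infty,0]$; thus $({\cal K}*x)(t-\sigma_k(t))-({\cal K}*x)(t-T_k)=\int_{t-\sigma_k(t)}^{t-T_k}({\cal K}*\dot x)(s)\,ds$, of norm at most $\nu_k M^K_\infty\|\dot x\|_\infty$ (resp.\ $\nu_k M^K_2\|\dot x\|_2$). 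Feeding these into $\|w\|$, then into $\|x\|\le\|v\|+\|y\|$ and $\|\dot x\|\le\|\dot v\|+\|\dot y\|$, yields a coupled pair of inequalities in $\|x\|$ and $\|\dot x\|$; the displayed smallness condition forces the resulting gain operator to be a contraction, and inverting it gives a finite gain from $r$ (resp.\ from $(r,\dot r)$) to $x$.

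I expect the main obstacle to be the bookkeeping for the shifted feedback terms $({\cal K}*x)(t-\sigma_k(t))$: one has to commute $d/dt$ through the convolution --- hence the role of the zero-initial-condition convention --- and then track the derivative gains ($M^{cl d}_\infty$, $M^K_\infty$, and their $L_2$ versions) carefully enough that the contraction closes with the stated constant; the remaining algebra is identical to Theorem~\ref{thm:2.1}. A secondary check is the preliminary reduction: that, for a stable $K$, stability of the single entry $r\mapsto x$ entails stability of all four closed-loop entries --- this is where the cited strong-(BIBO-)stabilizability results are used, and Hypothesis~(H) plays no part since $A_{-\ell}=0$ here.
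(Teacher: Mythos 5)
Your proposal follows essentially the same route as the paper: the decomposition $y=x-v$ against the nominal closed loop (\ref{eq:system10}), the bounds $\|f_j\|\le\mu_j\|\dot x\|$, $\|g_k\|_\infty\le 2\|r\|_\infty$, $\|g_k\|_2\le\nu_k\|\dot r\|_2$, the use of $M^K$ to handle the shifted feedback terms, and the final contraction argument under the stated smallness condition. The only (harmless) difference is that you treat the feedback perturbation directly as $({\cal K}*x)(t-\sigma_k(t))-({\cal K}*x)(t-T_k)$ via ${\cal K}*\dot x$, where the paper writes it as ${\cal K}*\tilde f_k$; both yield the same bound $\nu_k M^K\|\dot x\|$.
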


\begin{proof}

1) Let $\tilde K$ be a stable BIBO-stabilizing controller of system (\ref{eq:system2-ret}) with convolution kernel ${\cal K}$.

Considering the stabilization of (\ref{eq:system1-ret}) and (\ref{eq:system2-ret}) we deal with  the following equations, where as before $y=x-v$:

and

\begin{eqnarray*}
\label{eq:system11}
\dot y(t) & = & Ay(t)+\sum_{j=1}^\ell A_j y(t-h_j)   +\sum_{j=1}^\ell A_j f_j(t) \nonumber\\
&  & + B ({\cal K}*y)(t) + \int_D^{\delta(t)} h(\theta)x(t-\theta) \, d\theta \nonumber  \\
&   &+ \sum_{k=1}^m B_k ({\cal K}*\tilde f _k)(t) +\sum_{k=1}^m B_k ({\cal K}*{\tilde r})(t) , \qquad (t > 0),
\end{eqnarray*}
 with  $f_j(t)=x(t-\tau_j(t))-x(t-h_j)$, $\tilde f _k(t)=x(t-\sigma_k(t))-x(t-T_k)$ and $\tilde r(t)=r(t-\sigma_k(t))-r(t-T_k)$.

Using the same arguments as in section \ref{sect-stab}, we get  
\beq
\Vert \dot x \Vert_\infty \leq \Vert \dot v \Vert_\infty  + M_\infty^{cl d}(\sum_{j=1}^\ell \mu_j \Vert A_j\Vert  \Vert \dot x \Vert_\infty  + \sum_{k=1}^m \nu_k \Vert B_k \Vert   M_\infty ^K  \Vert \dot x\Vert  + 2 \sum_{k=1}^m \Vert B_k \Vert M_\infty ^K \Vert r \Vert_\infty + \epsilon \Vert h \Vert _\infty \Vert x \Vert _\infty).
\nonumber
\eeq

If $1- M_\infty ^{cl d}(\sum_{j=1}^\ell  \mu_j  \Vert A_j\Vert +  \sum_{k=1}^m \nu_k  \Vert B_k \Vert  M_K ) >0$, the above inequality gives us:
% the following  bound  for $\Vert \dot x \Vert $:

\beq
(1- M_\infty^{cld}(\sum_{j=1}^\ell \mu_j  \Vert A_j\Vert  +  \sum_{k=1}^m \nu_k  \Vert B_k \Vert  M_K) ) \Vert \dot x \Vert  \leq ( M_\infty ^{cl nom d}  + 2  M_\infty ^{cl d} \sum_{k=1}^m \Vert B_k \Vert M_\infty ^K )  \Vert r \Vert_\infty +\epsilon \Vert h \Vert _\infty \Vert x \Vert _\infty
\nonumber
\eeq 

%This gives a bound for the $f_j$ and the $\tilde f _k$, and 
As before writing $x=y+v$ we get a $L_\infty$-bound between $r$ and $x$.

2)  Taking $\dot  r $ in $L_2$ enables us, as in the Proof of Theorem  \ref{thm:2.1}, to bound $\Vert \dot  r \Vert_2$, which proves the result.

\end{proof}

\subsection{The case of neutral systems}

\begin{proposition}
%Let us consider system (\ref{eq:system1-ret}). 
%\red{Let us suppose that the nominal system (\ref{eq:system2}) has commensurate delays and satisfies hypothesis 1.}
Let us suppose that there exits a stable controller  $ K$ which $BIBO$-stabilizes the nominal system (\ref{eq:system2}), the $L_\infty$-gain of $K$ being denoted $M^K_\infty$, the  $L_\infty$-gain of the closed-loop being denoted $M^{cl}_\infty$ and the $L_\infty$-gain between the input and the derivative of the state of the closed-loop being denoted $M^{cld}_\infty$.

If  $M^{cld}_\infty(\sum_{j=1}^\ell  \mu_j \Vert A_j\Vert  +  \sum_{k=1}^m \nu_k\Vert B_k \Vert   M^ K_\infty  + 2  +\sum_{l=1}^L \Vert A_{-l} \Vert  )<  1$ then $ K$ BIBO-stabilizes system (\ref{eq:system1}).

\end{proposition}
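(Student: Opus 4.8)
The plan is to mirror the proof of the neutral BIBO theorem (the second theorem in Section~\ref{sect-stab}), but now applied to the closed-loop equations rather than the open-loop ones, in exact analogy with how Theorem~\ref{thm:4.1} part~1) adapts Theorem~\ref{thm:2.1} part~1) to the feedback setting. First I would write down the closed-loop equations for the neutral system~(\ref{eq:system1}) under the control law $u={\cal K}*x+r$, and the corresponding nominal closed-loop system built from~(\ref{eq:system2}). Setting $y=x-v$, subtracting the two gives an equation for $\dot y$ driven by the perturbation terms: $\sum_j A_j f_j(t)$ with $f_j(t)=x(t-\tau_j(t))-x(t-h_j)$, the neutral perturbation $\sum_\ell A_{-\ell}\tilde f_\ell(t)$ with $\tilde f_\ell(t)=\dot x(t-H_\ell)-\dot x(t-\gamma_\ell(t))$, the distributed-delay term $\int_D^{\delta(t)}h(\theta)I x(t-\theta)\,d\theta$, and the input-delay terms $\sum_k B_k({\cal K}*\tilde f_k)(t)+\sum_k B_k({\cal K}*\tilde r)(t)$ with $\tilde f_k(t)=x(t-\sigma_k(t))-x(t-T_k)$ and $\tilde r(t)=r(t-\sigma_k(t))-r(t-T_k)$.

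Next I would set up the key inequality for $\|\dot x\|_\infty$. Writing $\dot x=\dot v+\dot y$ and applying the closed-loop $L_\infty$-gain from the driving term to $\dot y$ (this is $M^{cld}_\infty$), I get
\[
\|\dot x\|_\infty \le \|\dot v\|_\infty + M^{cld}_\infty\Big(\sum_{j=1}^\ell\|A_j\|\|f_j\|_\infty + \sum_{\ell=1}^L\|A_{-\ell}\|\|\tilde f_\ell\|_\infty + \epsilon\|h\|_\infty\|x\|_\infty + \sum_{k=1}^m\|B_k\| M^K_\infty(\|\tilde f_k\|_\infty + \|\tilde r\|_\infty)\Big).
\]
Then I would invoke the already-established bounds: $\|f_j\|_\infty\le\mu_j\|\dot x\|_\infty$ as in the retarded case, $\|\tilde f_\ell\|_\infty\le 2\|\dot x\|_\infty$ as in the neutral open-loop proof, $\|\tilde f_k\|_\infty\le\nu_k\|\dot x\|_\infty$ (bounding $x(t-\sigma_k(t))-x(t-T_k)$ by an integral of $\dot x$ over an interval of length $\le\nu_k$, exactly as for $f_j$), and $\|\tilde r\|_\infty\le 2\|r\|_\infty$. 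Substituting, the coefficient of $\|\dot x\|_\infty$ on the right is $M^{cld}_\infty(\sum_j\mu_j\|A_j\| + 2\sum_\ell\|A_{-\ell}\| + \sum_k\nu_k\|B_k\|M^K_\infty)$, which under the hypothesis is strictly less than $1$, so I can solve for $\|\dot x\|_\infty$ and obtain
\[
\|\dot x\|_\infty \le (1-M'')^{-1}\Big(M^{cl\,nom\,d}_\infty\|r\|_\infty + 2M^{cld}_\infty\sum_{k=1}^m\|B_k\|M^K_\infty\|r\|_\infty + M^{cld}_\infty\epsilon\|h\|_\infty\|x\|_\infty\Big),
\]
where $M''$ denotes that coefficient. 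Finally, using $x=v+y$ together with the closed-loop gain $M^{cl}_\infty$ from $r$ to $x$ applied to the $v$-part and an analogous bound on $\|y\|_\infty$ in terms of $\|\dot x\|_\infty$, $\|x\|_\infty$, $\|r\|_\infty$, I eliminate $\|\dot x\|_\infty$ and $\|x\|_\infty$ to close the loop and extract a finite $L_\infty$-gain from $r$ to $x$.

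One subtlety I would flag: the statement's hypothesis only displays the single condition $M^{cld}_\infty(\sum_j\mu_j\|A_j\| + \sum_k\nu_k\|B_k\|M^K_\infty + 2 + \sum_\ell\|A_{-\ell}\|)<1$ (note the $2$ multiplying only the neutral block, as in the open-loop neutral theorem), so to genuinely close the argument one also needs the smallness of the term carrying $\epsilon\|h\|_\infty$, exactly as a second condition appeared in Theorem~\ref{thm:2.1}; I would either add the needed $\epsilon$-condition explicitly or note that it is subsumed. The main obstacle is therefore not any single estimate — each of $\|f_j\|_\infty$, $\|\tilde f_\ell\|_\infty$, $\|\tilde f_k\|_\infty$, $\|\tilde r\|_\infty$ is routine — but rather the bookkeeping of eliminating the three coupled quantities $\|\dot x\|_\infty$, $\|x\|_\infty$, and the driving norms so that the final contraction requires exactly the stated hypothesis (plus, if necessary, the companion $\epsilon$-condition), and making sure the closed-loop gains $M^{cl}_\infty$, $M^{cld}_\infty$, $M^{cl\,nom\,d}_\infty$ are the right ones to invoke at each step given that the controller $K$ is itself stable so that all these gains are finite.
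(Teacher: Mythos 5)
Your proposal follows essentially the same route as the paper: write the closed-loop nominal and difference equations under $u={\cal K}*x+r$, bound $\|f_j\|_\infty\le\mu_j\|\dot x\|_\infty$, $\|\tilde f_\ell\|_\infty\le 2\|\dot x\|_\infty$, $\|\tilde f_k\|_\infty\le\nu_k\|\dot x\|_\infty$ and $\|\tilde r\|_\infty\le 2\|r\|_\infty$, absorb the $\|\dot x\|_\infty$ terms using the stated contraction condition, and conclude via $x=v+y$. The subtlety you flag is real --- the paper's own proof also stops at the $\|\dot x\|_\infty$ inequality and leaves the $\epsilon\|h\|_\infty\|x\|_\infty$ term unabsorbed, so a companion smallness condition on $\epsilon\|h\|_\infty$ (as in Theorem~\ref{thm:2.1}) is indeed needed to fully close the argument, and your explicit acknowledgement of this is a point in your favour rather than a defect.
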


\begin{proof}
For neutral systems, we deal with equations 
\begin{eqnarray*}
\dot v(t)  + +\sum_{l=1}^L A_{-l} \dot v(t-H_l) & =& Av(t)+\sum_{j=1}^\ell A_j v(t-h_j)  + B {\cal K}*v(t) + \sum_{k=1}^m B_k {\cal K}*v(t-T_k) 
\nonumber\\
&  & + +\int_0^{D} h(\theta) v(t-\theta)\, d\theta    +Br(t) + \sum_{k=1}^m B_k r(t-T_k), \qquad (t > 0),\nonumber
\end{eqnarray*}
and 
\begin{eqnarray*}
\dot y(t) + \sum_{l=1}^L  A_{-l} \dot v(t-H_l)    & = & Av(yt)+\sum_{j=1}^\ell A_j y(t-h_j)   +\sum_{j=1}^\ell A_j f_j(t) \nonumber\\
& & + \sum_{l=1}^L  A_{-l}  (x(t-H_l)-x(t-\gamma(t)) + +\int_0^{D} h(\theta) y(t-\theta)\, d\theta  \\
& & + \int_D^{\delta(t)} h(\theta)x(t-\theta) \, d\theta \\
&  & + B (K*y)(t) + \sum_{k=1}^m B_k ({\cal K}*y)(t-T_k) +Br(t)+ \sum_{k=1}^m B_k r(t-T_k) \nonumber  \\
&   &+ \sum_{k=1}^m B_k ({\cal K}*\tilde f _k) +\sum_{k=1}^m B_k {\cal K}*(r(t-\sigma_k(t))-r(t-T_k)) , \qquad (t > 0),
\end{eqnarray*}
from which we get
%\red{
%\beq
\begin{multline*}
\Vert \dot x \Vert_\infty \leq \Vert \dot v \Vert_\infty  + M_\infty ^{cld}( 2    \sum_{l=1}^L   \Vert A_{-l}\Vert \Vert \dot x \Vert_\infty \sum_{j=1}^\ell \Vert A_j\Vert  \mu_j \Vert \dot x \Vert_\infty  \\
+ \sum_{k=1}^m \Vert B_k \Vert \nu_k  M_\infty ^K  \Vert \dot x\Vert  + 2 \sum_{k=1}^m \Vert B_k \Vert M_\infty ^K \Vert r \Vert_\infty + \epsilon \Vert h \Vert _\infty\Vert x \Vert _\infty) .
\end{multline*}
%\eeq}

\end{proof}

Although the existence of a stable controller cannot be guaranteed in the general case, let us mention that the existence of a stabilizing controller can be indeed guaranteed in the particular case of systems with commensurate delays.

It has been shown in \cite{BFP11} that neutral systems with commensurate delays and a finite number of poles in $\{ \Re s > a\}$ with $a<0$ are stabilizable in an $H_\infty$-sense:  coprime factorizations over $H_\infty$ have been  determined and the set of all stabilizing controllers was given. It is not difficult to see that the coprime factorizations are also in $\hat{\cal A}$, inducing BIBO-stabilizability as well. 

The next Proposition shows that, under hypothesis (H) systems (\ref{eq:system2}) with commensurate delays fall into the study of \cite{BFP11}.

\begin{proposition}
If the system (\ref{eq:system2}) has commensurate delays, \red{then} there exists $a<0$ such that system  (\ref{eq:system2}) only has a finite number of poles in $\{ \Re s > a\}$. 
\end{proposition}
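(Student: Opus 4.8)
The plan is to analyze the characteristic quasi-polynomial of the nominal neutral system~\eqref{eq:system2} and show that, thanks to Hypothesis~(H), its ``neutral part'' is dominant in a way that confines the unstable poles to a vertical strip. Write $\tau>0$ for the common period of the commensurate delays, so that $H_\ell = p_\ell \tau$, $h_j = q_j\tau$, and $D = N\tau$ (or, for the distributed term, each exponential appearing after integrating $\int_0^D h(\theta)e^{-\theta s}\,d\theta$ is a multiple of $\tau$; recall from the remark after~\eqref{eq:transfer1} that this integral produces only polynomial-in-$1/s$ times exponential terms). The characteristic function is
\[
\Delta(s) = \det\!\left(sI + \sum_{\ell=1}^L A_{-\ell}\, s\, e^{-H_\ell s} - A - \sum_{j=1}^J A_j e^{-h_j s} - \int_0^D h(\theta) I e^{-\theta s}\,d\theta\right).
\]
First I would factor out the highest power of $s$: for large $|s|$ the dominant behaviour is governed by $\det\bigl(I + \sum_{\ell=1}^L A_{-\ell} e^{-H_\ell s}\bigr)$, the principal part of the neutral term.

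The key step is a dominance estimate. By Hypothesis~(H), $\sum_{\ell=1}^L \|A_{-\ell}\| < 1$, so for $\Re s \ge 0$ we have $\bigl\|\sum_\ell A_{-\ell} e^{-H_\ell s}\bigr\| \le \sum_\ell \|A_{-\ell}\| < 1$, whence $I + \sum_\ell A_{-\ell} e^{-H_\ell s}$ is invertible and $\det$ of it is bounded away from zero on $\Re s \ge 0$. More generally, since $|e^{-H_\ell s}| = e^{-H_\ell \Re s} \le e^{H_\ell |a|}$ on $\Re s \ge a$, by choosing $|a|$ small enough (depending on the $H_\ell$ and on the gap $1 - \sum_\ell\|A_{-\ell}\|$) we keep $\sum_\ell \|A_{-\ell}\| e^{H_\ell|a|} < 1$, so $\det\bigl(I + \sum_\ell A_{-\ell} e^{-H_\ell s}\bigr) \ne 0$ on the whole half-plane $\Re s \ge a$. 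Consequently, for $|s|$ large in $\Re s \ge a$, $\Delta(s) = s^n \bigl(\det(I + \sum_\ell A_{-\ell}e^{-H_\ell s}) + o(1)\bigr) \ne 0$: all zeros of $\Delta$ in $\Re s \ge a$ lie in a bounded region $\{\Re s \ge a\} \cap \{|s| \le R\}$ for some $R$.

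To conclude finiteness I would invoke the standard fact that a nonzero exponential polynomial (equivalently, an entire function of exponential type that is not identically zero — $\Delta$ is a finite $\CC$-linear combination of terms $s^k e^{-\lambda s}$ with $\lambda$ ranging over nonnegative multiples of $\tau$) has only finitely many zeros in any bounded region, because its zeros have no finite accumulation point. Hence $\Delta$ has only finitely many zeros in $\{\Re s \ge a\}\cap\{|s|\le R\}$, and none outside that disc within $\{\Re s \ge a\}$, giving only finitely many poles of $G_{nom}$ in $\{\Re s > a\}$. (One should also note $\Delta \not\equiv 0$: its leading term $s^n \det(I + \sum_\ell A_{-\ell}e^{-H_\ell s})$ is nonzero for large real $s$ by the dominance bound.)

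The main obstacle is making the $o(1)$ estimate uniform and rigorous: one must expand $\Delta(s)/s^n$ and check that every term other than $\det(I + \sum_\ell A_{-\ell}e^{-H_\ell s})$ carries a strictly negative power of $s$ times a factor bounded on $\Re s \ge a$ (the terms $A$, $A_j e^{-h_j s}$, and the distributed-delay contributions all come divided by at least one power of $s$, and $e^{-h_j s}$, $e^{-\theta s}$ are bounded by $e^{h_j|a|}$, $e^{D|a|}$ on $\Re s \ge a$), so that their contribution to the determinant is $O(1/|s|)$ uniformly. Once that uniform smallness is in hand, the continuity/lower-bound argument for $\det(I+\sum_\ell A_{-\ell}e^{-H_\ell s})$ on $\Re s \ge a$ does the rest. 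This places~\eqref{eq:system2} squarely in the framework of~\cite{BFP11}.
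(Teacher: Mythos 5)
Your proof is correct, but it follows a genuinely different route from the paper's. The paper invokes the Bellman--Cooke theory of chains of poles \cite{BC}: using commensurability it substitutes $z=e^{-sH}$ into the coefficient of $s^n$, reduces the question to the ordinary polynomial $\det(I+\sum_{\ell=1}^L A_{-\ell}z^{\ell})$, and shows by contradiction from Hypothesis (H) that every root satisfies $|z|>1$ (if $|z|\le 1$ and $x^*$ is a unit null vector then $1\le(\sum_\ell\|A_{-\ell}\|)|z|<1$); hence all pole chains are asymptotic to vertical lines strictly in the open left half-plane and only finitely many poles lie in $\{\Re s>a\}$ for a suitable $a<0$. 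You instead prove a direct, quantitative lower bound $|\Delta(s)|\ge c|s|^n$ on $\{\Re s\ge a,\ |s|\ge R\}$ by choosing $|a|$ small enough that $\sum_\ell\|A_{-\ell}\|e^{H_\ell|a|}<1$, and then dispose of the compact remainder $\{\Re s\ge a,\ |s|\le R\}$ by analyticity of $\Delta$ (it is entire, since the distributed term $\int_0^D h(\theta)e^{-\theta s}\,d\theta$ has no singularity, and not identically zero). Both arguments exploit Hypothesis (H) in essentially the same way, but yours is more self-contained --- it avoids the chains-of-poles machinery entirely --- and, notably, it never actually uses commensurability of the delays, so it establishes the conclusion for arbitrary fixed delays $H_\ell$. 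What it gives up is the finer structural information the paper's argument provides, namely the location of the vertical asymptotes $\Re s=-\ln|z_0|/H$ of the pole chains, which is the form of the statement needed to place the system in the framework of \cite{BFP11}. Your uniformity concerns about the $o(1)$ term are easily resolved as you indicate (all perturbing entries are $O(1/|s|)$ uniformly on $\Re s\ge a$, and $\det$ is Lipschitz on bounded sets of matrices), so there is no gap.
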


\begin{proof}
It is well-known \cite{BC} that the location of chains of poles of system (\ref{eq:system2}) can be determined from the denominator of its transfer function, more precisely from the coefficient (containing exponential terms) of the term $s^n$. 

Considering $\det(sI-A+\sum_{\ell=1}^L A_{\ell}s e^{-s \ell H} - \sum_{j=1}^J A_j e^{-sh_j} - \int_0^D h(\theta) I e^{-\theta s}d\theta)$ we notice that it is sufficient to look at $\det(sI+\sum_{\ell=1}^L A_{-\ell}se^{-s \ell})$ as other terms do not contribute to the coefficient of the 
term $s^n$. 

Letting $z=e^{-s\ell}$, we get $\det(sI+\sum_{\ell=1}^L A_{-\ell}se^{-s \ell})= s^n \det(I + \sum_{\ell=1}^L A_{-\ell}z^{\ell})$.

Now, assume that there is $z \in \CC $ such that $ \vert z \vert \leq 1$  and $\det(I + \sum_{\ell=1}^L A_{-\ell}z^{\ell})=0$. Then there exits $x^* \in \CC ^n \setminus {0}$ with $\Vert x^* \Vert =1$ such that  $ x^*= -( \sum_{\ell=1}^L A_{-\ell}z^{\ell})x^*$. 

In this case, by hypothesis (H)
\begin{equation*}
1= \Vert -x^* \Vert= \Vert \sum_{\ell=1}^L A_{-\ell}z^{\ell} x^*  \Vert \leq \sum_{\ell=1}^L \Vert A_{-\ell} \Vert \Vert x^* \Vert\vert z \vert  \leq ( \sum_{\ell=1}^L \Vert A_{-\ell} \Vert ) \vert z \vert <1
\end{equation*}
which is absurd so all roots of  $\det(I + \sum_{\ell=1}^L A_{-\ell}z^{\ell})=0$ are of modulus strictly greater than one entailing that all roots in $s$ are strictly in the left half plane. All chains of poles of neutral system (\ref{eq:system2}) are then asymptotic to vertical axes located in the open right half-plane ensuring a finite number of pole in  $\{ \Re s > a\}$ with $a<0$.
%% 
%%The roots of $\det(sI+\sum_{\ell=1}^L A_{-\ell}se^{-s\ell})$ correspond to the position of vertical lines in the complex plane which are asymptots 
 \end{proof}

%These results remain valid in the case where the delays of the retarded terms of the transfer functions considered in \cite{BFP11} are not commensurate.

\section{Example}\label{sec:ex}

Consider the elementary delay system
\[
\dot x(t) + x(t-h) = u(t),
\]
 with transfer function
$G_h(s)=1/(s+e^{-sh})$. This is $H_\infty$ and BIBO stable provided that $0 \le h < \pi/2$ (see, for example, \cite[Chap. 6]{Par04}).

Now we consider the perturbed system
\beq\label{eq:pertdelay}
\dot x(t)+ x(t-\tau(t)) = u(t),
\eeq
with $0 \le h < \tau (t) \le h+\mu$.

By Theorem \ref{thm:2.1}, we have $H^\infty$ stability if $\mu < M^d_2{}^{-1}$.

For $h=0,0.5, 1$ and $ 1.5$ these values are
$1$, $0.63$, $0.32$ and $ 0.03$ respectively; naturally $h+\mu < \pi/2$ in all cases. 

For BIBO stability a similar result holds, except that we require the BIBO norm of $sG_h(s)=1-e^{-sh}G_h(s)$, which is not easy to calculate as we do not have an explicit form of the impulse response. One way round this is to use the Hardy--Littlewood inequality given in \cite[p. 182]{GLS} (see also \cite{BP02}), namely that
\[
\|G_h\|_{BIBO} \le \frac12 \|G'_h\|_{L^1(i\RR)}.
\]
Using this bound, we find   that for $h=0,0.5,1$ and $1.5$ the BIBO norm of $G_h$ is at most $1$, $1.01$, $2.96$ and $39.1$ respectively, giving BIBO
stability for $\mu$ at most $0.5$, $0.50$, $0.25$ and $0.025$ respectively.\\

Now for $h=2$ the system $G_h$ is not stable, but it is easily stabilized with the constant controller $K=-1$, giving a closed-loop transfer function of
\[
G_{cl}(s)=\frac{s+e^{-2s}}{s+1+e^{-2s}}.
\]
Calculations indicate that $\|G_{cl}\|_\infty=1.54$ and $\|G_{cl}\|_{BIBO} \le 3.89$. 
We may therefore apply Theorem \ref{thm:4.1}, and conclude that $K$ stabilizes the 
system (\ref{eq:pertdelay}) provided that $2 \le \tau(t) \le 2+\mu$, where $\mu=0.26$ 
in the BIBO case and $\mu=0.65$ in the $H_\infty$ case.\\

\section*{Acknowledgements}

We are grateful to Asmahan Alajyan for useful discussions of the numerical results, \red{A. Fioravanti for recalling to us a crucial fact in the proof of Theorem 4.2. and M. Souza for his help in making our proof more direct}.

\end{document}